\title{A restriction on proper actions on homogeneous spaces of a reductive type}
\author{Maciej Boche\'nski and Marek Ogryzek}
\begin{document}

\newtheorem{theorem}{Theorem}
\newtheorem{proposition}{Proposition}
\newtheorem{lemma}{Lemma}
\newtheorem{definition}{Definition}
\newtheorem{example}{Example}
\newtheorem{note}{Note}
\newtheorem{corollary}{Corollary}
\newtheorem{remark}{Remark}

\maketitle{}

\begin{abstract}
Let $L$ be a reductive subgroup of a reductive Lie group $G$. Let $G/H$ be a homogeneous space of reductive type. We provide a necessary condition for the properness of the action of $L$ on $G/H$. As an application we give examples of spaces that do not admit standard compact Clifford-Klein forms.
\end{abstract}

\section{Introduction}

Let $L$ be a locally compact topological group acting continuously on a locally Hausdorff topological space $M$. This action is called {\it \textbf{proper}} if for every compact subset $C \subset M$ the set
$$L(C):=\{  g\in L \ | \ g\cdot C \cap C \neq \emptyset \}$$
is compact. In this paper, our main concern is the following question posed by T. Kobayashi \cite{kob3}
\begin{center}
How ``large'' subgroups of $G$ can act properly 
\end{center}
\begin{center}
on a homogeneous space $G/H$?  \textbf{(Q1)}
\end{center}

We restrict our attention to the case where $M=G/H$ is a homogeneous space of reductive type and always assume that $G$ is a linear connected reductive real Lie group with the Lie algebra $\mathfrak{g}.$ Let $H\subset G$ be a closed subgroup of $G$ with finitely many connected components and $\mathfrak{h}$ be the Lie algebra of $H.$
\begin{definition}
 The subgroup $H$ is reductive in $G$ if $\mathfrak{h}$ is reductive in $\mathfrak{g},$ that is, there exists a Cartan involution $\theta $ for which $\theta (\mathfrak{h}) = \mathfrak{h}.$
 The space $G/H$ is called the homogeneous space of reductive type.
 \label{def1}
\end{definition}

\noindent
Note that if $\mathfrak{h}$ is reductive in $\mathfrak{g}$ then $\mathfrak{h}$ is a reductive Lie algebra. 

It is natural to ask when does a closed subgroup of $G$ act properly on a space of reductive type $G/H.$ This problem was treated, inter alia, in \cite{ben}, \cite{bt}, \cite{kas}, \cite{kob2}, \cite{kob4}, \cite{kob1}, \cite{kul} and \cite{ok}. In \cite{kob2} one can find a very important criterion for a proper action of a subgroup $L$ reductive in $G.$ To state this criterion we need to introduce some additional notation. Let $\mathfrak{l}$ be the Lie algebra of $L.$ Take a Cartan involution $\theta$ of $\mathfrak{g}.$ We obtain the Cartan decomposition
\begin{equation}
 \mathfrak{g}=\mathfrak{k} + \mathfrak{p}.
 \label{eq1}
\end{equation}
Choose a maximal abelian subspace $\mathfrak{a}$ in $\mathfrak{p}.$ The subspace $\mathfrak{a}$ is called the \textbf{\textit{maximally split abelian subspace}} of $\mathfrak{p}$ and $\text{rank}_{\mathbb{R}}(\mathfrak{g}) := \text{dim} (\mathfrak{a})$ is called the \textbf{\textit{real rank}} of $\mathfrak{g}.$ It follows from Definition \ref{def1} that $\mathfrak{h}$ and $\mathfrak{l}$ admit Cartan decompositions 
$$\mathfrak{h}=\mathfrak{k}_{1} + \mathfrak{p}_{1} \ \text{and} \ \mathfrak{l}=\mathfrak{k}_{2} + \mathfrak{p}_{2},$$
given by Cartan involutions $\theta_{1}, \ \theta_{2}$ of $\mathfrak{g}$ such that $\theta_{1} (\mathfrak{h})= \mathfrak{h}$ and $\theta_{2} (\mathfrak{l})= \mathfrak{l}.$ Let $\mathfrak{a}_{1} \subset \mathfrak{p}_{1}$ and $\mathfrak{a}_{2} \subset \mathfrak{p}_{2}$ be maximally split abelian subspaces of $\mathfrak{p}_{1}$ and $\mathfrak{p}_{2},$ respectively. One can show that there exist $a,b \in G$ such that $\mathfrak{a}_{\mathfrak{h}} := \text{\rm Ad}_{a}\mathfrak{a}_{1} \subset \mathfrak{a}$ and $\mathfrak{a}_{\mathfrak{l}} := \text{\rm Ad}_{b}\mathfrak{a}_{2} \subset \mathfrak{a}.$ Denote by $W_{\mathfrak{g}}$ the Weyl group of $\mathfrak{g}.$ In this setting the following holds
\begin{theorem}[Theorem 4.1 in \cite{kob2}] The following three conditions are equivalent
 \begin{enumerate}
   \item $L$ acts on $G/H$ properly.
   \item $H$ acts on $G/L$ properly.
   \item For any $w \in W_{\mathfrak{g}},$ $w\cdot \mathfrak{a}_{\mathfrak{l}} \cap \mathfrak{a}_{\mathfrak{h}} =\{ 0 \}.$ 
 \end{enumerate}
 \label{twkob}
\end{theorem}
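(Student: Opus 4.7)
The plan is to pass from the group theoretic properness condition to a condition on the images of the Cartan projection $\mu\colon G\to\overline{\mathfrak{a}^+}$ defined by the $KAK$-decomposition, and then exploit the linearity of these images for reductive subgroups.

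First, observe that (1)$\Leftrightarrow$(2) is a direct consequence of the definition of properness together with the symmetry of the two-sided action of $L\times H$ on $G$ given by $(l,h)\cdot g=lgh^{-1}$. Indeed, $L$ acts properly on $G/H$ if and only if for every compact $C\subset G$ the set $\{(l,h)\in L\times H \mid lCh^{-1}\cap C\neq\emptyset\}$ is compact, and this condition is manifestly symmetric in $L$ and $H$.

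The substantive content is the equivalence (1)$\Leftrightarrow$(3). I would organise it in three steps:

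\textbf{Step 1 (Cartan projection criterion).} Show that $L$ acts properly on $G/H$ if and only if for every compact $B\subset\mathfrak{a}$ the set $\mu(L)\cap(\mu(H)+B)$ is bounded in $\mathfrak{a}$. The forward direction uses that $K$ is compact and that $g\mapsto\mu(g)$ is a continuous, proper map modulo $K\times K$; if properness fails one extracts a divergent sequence $\{l_n\}\subset L$ with $l_n h_n\in$ a fixed compact, and by compactness of $K$ one obtains $\mu(l_n)-w\mu(h_n)$ bounded for some $w\in W_{\mathfrak{g}}$. The converse is straightforward from the $KAK$-decomposition.

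\textbf{Step 2 (Computing $\mu$ on reductive subgroups).} Using that $L$ is reductive in $G$, after replacing the Cartan involution of $\mathfrak{l}$ by a $G$-conjugate we may assume $\theta_2=\theta$, so that $\mathfrak{a}_{\mathfrak{l}}\subset\mathfrak{a}$ and $K_L:=L\cap K$ gives the Cartan decomposition $L=K_L\exp(\mathfrak{a}_{\mathfrak{l}}^{\text{conj}})K_L$. Applying $\mu$ to both sides yields
\[
\mu(L)=\bigl(W_{\mathfrak{g}}\cdot\mathfrak{a}_{\mathfrak{l}}\bigr)\cap\overline{\mathfrak{a}^+},
\]
and analogously for $\mu(H)$.

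\textbf{Step 3 (Linear algebra in $\mathfrak{a}$).} Combining Steps 1 and 2, condition (1) is equivalent to
\[
\bigl(W_{\mathfrak{g}}\cdot\mathfrak{a}_{\mathfrak{l}}\bigr)\cap\bigl(W_{\mathfrak{g}}\cdot\mathfrak{a}_{\mathfrak{h}}+B\bigr)\ \text{is bounded for every compact }B\subset\mathfrak{a}.
\]
Since $W_{\mathfrak{g}}\cdot\mathfrak{a}_{\mathfrak{l}}$ and $W_{\mathfrak{g}}\cdot\mathfrak{a}_{\mathfrak{h}}$ are finite unions of linear subspaces of $\mathfrak{a}$, absorbing the $W_{\mathfrak{g}}$-action on the $\mathfrak{l}$-side shows that this boundedness is equivalent to $w\cdot\mathfrak{a}_{\mathfrak{l}}\cap\mathfrak{a}_{\mathfrak{h}}=\{0\}$ for every $w\in W_{\mathfrak{g}}$: a nonzero common vector $X$ produces the unbounded ray $\mathbb{R}_{\geq 0}X$ contained in the intersection, violating boundedness; conversely, if all pairwise intersections are trivial, a standard compactness/homogeneity argument on the unit sphere of $\mathfrak{a}$ gives a uniform lower bound for $d(w\cdot x,\mathfrak{a}_{\mathfrak{h}})/\|x\|$ on each piece, which provides the required boundedness.

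The main obstacle is Step 1, since it requires controlling how the Cartan projection interacts with products: bounding $\mu(lh^{-1})$ in terms of $\mu(l)$ and $\mu(h)$ (up to a Weyl element) relies on the Kostant convexity-type estimates for $KAK$. Step 2 is essentially a computation, and Step 3 is linear algebra once one knows the two sets are finite unions of linear subspaces.
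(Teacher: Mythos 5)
This statement is imported by the paper verbatim from Kobayashi (Theorem 4.1 in \cite{kob2}); the paper gives no proof of it, so the relevant benchmark is Kobayashi's original argument, and your outline follows essentially that same route: reduce properness to the Cartan projection $\mu$, compute $\mu(L)=\bigl(W_{\mathfrak{g}}\cdot\mathfrak{a}_{\mathfrak{l}}\bigr)\cap\overline{\mathfrak{a}^{+}}$ from the Cartan decomposition of $L$, and finish with linear algebra on finite unions of subspaces. Your Step 3 is correct as sketched (the rescaling argument $x_{n}/\|x_{n}\|\to u$, $y_{n}/\|x_{n}\|\to u$ after passing to a subsequence lying in a fixed pair $w_{1}\mathfrak{a}_{\mathfrak{l}}$, $w_{2}\mathfrak{a}_{\mathfrak{h}}$ does the converse), and Step 2 is right, though you should note two points the paper's setup quietly supplies: replacing $\theta_{2}$ by a $G$-conjugate replaces $L$ by a conjugate subgroup, which is harmless because properness of the action on $G/H$ is invariant under conjugating $L$ (and $\mathfrak{a}_{\mathfrak{l}}$ is defined in the paper precisely as $\mathrm{Ad}_{b}\mathfrak{a}_{2}\subset\mathfrak{a}$); and the global decomposition $L=K_{L}\exp(\mathfrak{p}\cap\mathfrak{l})$ uses the hypothesis that $L$ has finitely many connected components, which is part of the definition of ``reductive in $G$'' and is a lemma in \cite{kob2}, not a formality.

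The one substantive reliance, which you yourself flag, sits in Step 1: the assertion that $l_{n}\in Ch_{n}C'$ forces $\mu(l_{n})-\mu(h_{n})$ to stay bounded is the analytic heart of Kobayashi's proof, and invoking ``Kostant convexity-type estimates'' is a placeholder rather than an argument; the clean subadditive form $\|\mu(g_{1}gg_{2})-\mu(g)\|\leq\|\mu(g_{1})\|+\|\mu(g_{2})\|$ is in fact later work (Benoist \cite{ben}, Kobayashi \cite{kob1}), while \cite{kob2} handles it by a direct sequence/compactness analysis of the $KAK$ decomposition. Also, in your Step 1 no extra Weyl element $w$ is needed: $\mu$ already takes values in the closed chamber $\overline{\mathfrak{a}^{+}}$, the only twist being $\mu(g^{-1})=-w_{0}\cdot\mu(g)$, and the Weyl group genuinely enters only through Step 2; likewise, the symmetry giving (1)$\Leftrightarrow$(2) is realized by $g\mapsto g^{-1}$, which intertwines the $(L\times H)$- and $(H\times L)$-actions, rather than being literally ``manifest'' in your displayed condition.
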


\noindent
Note that the criterion 3. in Theorem \ref{twkob} depends on how $L$ and $H$ are embedded in $G$ up to inner-automorphisms. Theorem \ref{twkob} provides a partial answer to Q1.
\begin{corollary}[Corollary 4.2 in \cite{kob2}] If $L$ acts properly on $G/H$ then
$$\text{\rm rank}_{\mathbb{R}}(\mathfrak{l}) + \text{\rm rank}_{\mathbb{R}}(\mathfrak{h}) \leq \text{\rm rank}_{\mathbb{R}} (\mathfrak{g}).$$
\label{coko}
\end{corollary}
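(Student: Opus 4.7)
The plan is to deduce the inequality directly from condition 3 of Theorem \ref{twkob} by specializing $w$ to the identity element of $W_{\mathfrak{g}}$.

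First I would observe that since $L$ acts properly on $G/H$, Theorem \ref{twkob} gives $w \cdot \mathfrak{a}_{\mathfrak{l}} \cap \mathfrak{a}_{\mathfrak{h}} = \{0\}$ for every $w \in W_{\mathfrak{g}}$. Taking $w$ to be the identity yields $\mathfrak{a}_{\mathfrak{l}} \cap \mathfrak{a}_{\mathfrak{h}} = \{0\}$, so the two subspaces meet trivially inside the ambient maximally split abelian subspace $\mathfrak{a} \subset \mathfrak{p}$.

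Next I would combine this with the inclusions $\mathfrak{a}_{\mathfrak{l}}, \mathfrak{a}_{\mathfrak{h}} \subset \mathfrak{a}$ to obtain a direct sum decomposition inside $\mathfrak{a}$, giving
$$\dim(\mathfrak{a}_{\mathfrak{l}}) + \dim(\mathfrak{a}_{\mathfrak{h}}) = \dim(\mathfrak{a}_{\mathfrak{l}} \oplus \mathfrak{a}_{\mathfrak{h}}) \leq \dim(\mathfrak{a}).$$
Finally I would identify the three dimensions with the corresponding real ranks: since $\mathfrak{a}_{\mathfrak{h}} = \mathrm{Ad}_a \, \mathfrak{a}_1$ and $\mathfrak{a}_{\mathfrak{l}} = \mathrm{Ad}_b \, \mathfrak{a}_2$ are obtained by an invertible linear transformation from the maximally split abelian subspaces of $\mathfrak{h}$ and $\mathfrak{l}$, their dimensions equal $\mathrm{rank}_{\mathbb{R}}(\mathfrak{h})$ and $\mathrm{rank}_{\mathbb{R}}(\mathfrak{l})$ respectively, while $\dim(\mathfrak{a}) = \mathrm{rank}_{\mathbb{R}}(\mathfrak{g})$ by definition, which yields the claimed inequality.

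There is essentially no obstacle here; the corollary is a one-line dimension count given Theorem \ref{twkob}, and the only subtlety worth spelling out is why the three dimensions coincide with the real ranks, which follows from the construction of $\mathfrak{a}_{\mathfrak{h}}$ and $\mathfrak{a}_{\mathfrak{l}}$ as $\mathrm{Ad}$-images of maximally split abelian subspaces.
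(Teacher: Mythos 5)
Your proof is correct: the paper itself states this corollary as an imported result from \cite{kob2} without giving a proof, and your argument---specializing condition 3 of Theorem \ref{twkob} to $w=\mathrm{id}$, noting $\mathfrak{a}_{\mathfrak{l}}\cap\mathfrak{a}_{\mathfrak{h}}=\{0\}$ forces $\dim(\mathfrak{a}_{\mathfrak{l}})+\dim(\mathfrak{a}_{\mathfrak{h}})\leq\dim(\mathfrak{a})$, with the dimensions equal to the real ranks since $\mathrm{Ad}_a$ and $\mathrm{Ad}_b$ are linear isomorphisms---is exactly the standard derivation given in Kobayashi's original paper. No gap; the only point worth making explicit, which you do, is the identification of $\dim(\mathfrak{a}_{\mathfrak{h}})$ and $\dim(\mathfrak{a}_{\mathfrak{l}})$ with $\mathrm{rank}_{\mathbb{R}}(\mathfrak{h})$ and $\mathrm{rank}_{\mathbb{R}}(\mathfrak{l})$.
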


\noindent
Hence the real rank of $L$ is bounded by a constant which depends on $G/H,$ no matter how $H$ and $L$ are embedded in $G.$ In this paper we find a similar, stronger restriction for Lie groups $G,H,L$ by means of a certain tool which we call the a-hyperbolic rank (see Section 2, Definition \ref{dd2} and Table \ref{tab1}). In more detail we prove the following
\begin{theorem}
If $L$ acts properly on $G/H$ then
$$\mathop{\mathrm{rank}}\nolimits_{\text{\rm a-hyp}}(\mathfrak{l}) + \mathop{\mathrm{rank}}\nolimits_{\text{\rm a-hyp}}(\mathfrak{h}) \leq \mathop{\mathrm{rank}}\nolimits_{\text{\rm a-hyp}} (\mathfrak{g}).$$
\label{twgl}
\end{theorem}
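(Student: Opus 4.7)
The plan is to reduce Theorem \ref{twgl} to a linear-algebraic inequality inside the maximally split abelian subspace $\mathfrak{a} \subset \mathfrak{p}$ of $\mathfrak{g}$, using Kobayashi's properness criterion (Theorem \ref{twkob}). For any reductive subalgebra $\mathfrak{r} \subset \mathfrak{g}$, I would single out, inside its maximally split abelian subspace $\mathfrak{a}_{\mathfrak{r}}$, a canonical subspace $\mathfrak{a}_{\mathfrak{r}}^{\mathrm{a\text{-}hyp}}$ whose dimension equals $\mathrm{rank}_{\text{a-hyp}}(\mathfrak{r})$ (as prescribed by Definition \ref{dd2} and Table \ref{tab1}), defined intrinsically in terms of restricted roots and Cartan decomposition so that it is $W_{\mathfrak{r}}$-invariant.

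The key technical step is the following functoriality statement: whenever $\mathfrak{h}$ is reductive in $\mathfrak{g}$ and $\mathfrak{a}_{\mathfrak{h}} \subset \mathfrak{a}$ is chosen via the Ad-conjugation described before Theorem \ref{twkob}, one has $\mathfrak{a}_{\mathfrak{h}}^{\mathrm{a\text{-}hyp}} \subset \mathfrak{a}^{\mathrm{a\text{-}hyp}}_{\mathfrak{g}}$, and analogously for $\mathfrak{l}$. Granting this compatibility, the argument closes quickly: by Theorem \ref{twkob}, for every $w \in W_{\mathfrak{g}}$ one has $w\cdot \mathfrak{a}_{\mathfrak{l}} \cap \mathfrak{a}_{\mathfrak{h}} = \{0\}$, hence a fortiori $w\cdot \mathfrak{a}_{\mathfrak{l}}^{\mathrm{a\text{-}hyp}} \cap \mathfrak{a}_{\mathfrak{h}}^{\mathrm{a\text{-}hyp}} = \{0\}$. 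Since both $w\cdot \mathfrak{a}_{\mathfrak{l}}^{\mathrm{a\text{-}hyp}}$ and $\mathfrak{a}_{\mathfrak{h}}^{\mathrm{a\text{-}hyp}}$ lie inside the same ambient space $\mathfrak{a}^{\mathrm{a\text{-}hyp}}_{\mathfrak{g}}$ of dimension $\mathrm{rank}_{\text{a-hyp}}(\mathfrak{g})$, and have trivial intersection, a direct dimension count delivers the inequality.

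The main obstacle is precisely the compatibility step: verifying that the a-hyperbolic subspace of a reductive subalgebra maps into the a-hyperbolic subspace of the ambient algebra. Because $\mathrm{rank}_{\text{a-hyp}}$ is described case-by-case through Table \ref{tab1} over the simple real Lie algebras, the natural route is to reduce the check to simple factors and then exploit the classification of reductive subalgebras (via Dynkin/Berger-type lists and restricted root data) to trace how $\mathfrak{a}_{\mathfrak{r}}^{\mathrm{a\text{-}hyp}}$ is carried into $\mathfrak{a}^{\mathrm{a\text{-}hyp}}_{\mathfrak{g}}$. A uniform, classification-free proof would require a root-theoretic characterization of $\mathfrak{a}_{\mathfrak{r}}^{\mathrm{a\text{-}hyp}}$ that is manifestly functorial under reductive inclusions, which I anticipate being the most delicate part of the argument.
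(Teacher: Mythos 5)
Your reduction to a dimension count inside $\mathfrak{a}$ is the right skeleton, but the ``functoriality'' step on which everything rests is not merely delicate --- as stated it is false, and repairing it is the actual content of the theorem. The canonical subspace of Definition \ref{dd2} is $\mathfrak{b}$, the fixed-point set of $-w_{0}$ in $\mathfrak{a}$; it is not $W_{\mathfrak{g}}$-invariant (already for $\mathfrak{g}=\mathfrak{sl}(3,\mathbb{R})$ it is a line in the two-dimensional $\mathfrak{a}$, on which $W_{\mathfrak{g}}\cong S_{3}$ acts with no invariant line), so neither your requirement that $\mathfrak{a}_{\mathfrak{r}}^{\mathrm{a\text{-}hyp}}$ be Weyl-invariant nor your assertion that $w\cdot\mathfrak{a}_{\mathfrak{l}}^{\mathrm{a\text{-}hyp}}$ stays inside $\mathfrak{a}_{\mathfrak{g}}^{\mathrm{a\text{-}hyp}}$ for all $w\in W_{\mathfrak{g}}$ can hold. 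More importantly, the naive inclusion $\mathfrak{a}_{\mathfrak{h}}^{\mathrm{a\text{-}hyp}}\subset\mathfrak{a}_{\mathfrak{g}}^{\mathrm{a\text{-}hyp}}$ fails in general: the positive chamber of $\mathfrak{h}$ is unrelated to that of $\mathfrak{g}$, and for instance an $\mathfrak{sl}(2,\mathbb{R})$-subalgebra can be embedded so that its line $\mathfrak{b}_{[\mathfrak{h},\mathfrak{h}]}$ is any line in the $W_{\mathfrak{g}}$-orbit of $\mathfrak{b}$, not $\mathfrak{b}$ itself. What is true, and what the paper proves, is containment only up to a Weyl element: $w_{\mathfrak{h}}\cdot\mathfrak{b}_{[\mathfrak{h},\mathfrak{h}]}\subset\mathfrak{b}$ for some single $w_{\mathfrak{h}}\in W_{\mathfrak{g}}$.

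Producing that uniform $w_{\mathfrak{h}}$ is where the real work lies, and the paper does it with no classification at all --- your fallback via Dynkin/Berger-type lists of reductive subalgebras would be an enormous case analysis with no usable classification to lean on. The mechanism is orbit-theoretic: by Okuda's correspondence (Lemma \ref{lma}) the cone $\mathfrak{b}^{+}_{[\mathfrak{h},\mathfrak{h}]}$ parametrizes antipodal hyperbolic orbits of $\mathfrak{h}$; Lemma \ref{lma3} promotes each such orbit to an antipodal hyperbolic orbit of $\mathfrak{g}$ (using the group-level inclusion $H\subset G$ to get $\mathrm{Ad}_{h}(X)=-X$ with $h\in G$), so every point of the cone is $W_{\mathfrak{g}}$-conjugate into $\mathfrak{b}^{+}$ --- a priori by a point-dependent element. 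The convex-cone covering lemma (Lemma \ref{lma2}: a convex cone contained in a finite union of subspaces lies in one of them) makes the Weyl element uniform over the cone, whence the span $\mathfrak{b}_{[\mathfrak{h},\mathfrak{h}]}$ lands in $w_{\mathfrak{h}}^{-1}\cdot\mathfrak{b}$, and similarly for $\mathfrak{l}$. Note finally that since $\mathfrak{b}_{[\mathfrak{h},\mathfrak{h}]}$ and $\mathfrak{b}_{[\mathfrak{l},\mathfrak{l}]}$ are moved into $\mathfrak{b}$ by \emph{different} elements $w_{\mathfrak{h}},w_{\mathfrak{l}}$, the concluding dimension count must invoke Kobayashi's criterion at $w_{2}=w_{\mathfrak{h}}^{-1}w_{\mathfrak{l}}$, not at $w=e$; this is precisely why condition 3 of Theorem \ref{twkob} quantifies over all of $W_{\mathfrak{g}}$, and your ``a fortiori'' step silently relies on the false $W_{\mathfrak{g}}$-stability noted above. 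In short, your outer frame (Kobayashi's criterion plus Lemma \ref{lma1} inside $\mathfrak{b}$) matches the paper, but the two missing ideas --- the antipodal hyperbolic orbit correspondence and the convex-cone covering lemma --- are the proof.
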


Recall that a homogeneous space $G/H$ of reductive type admits a \textbf{\textit{compact Clifford-Klein form}} if there exists a discrete subgroup $\Gamma \subset G$ such that $\Gamma$ acts properly on $G/H$ and $\Gamma \backslash G/H$ is compact. The space $G/H$ admits a \textbf{\textit{standard compact Clifford-Klein form}} in the sense of Kassel-Kobayashi \cite{kako} if there exists a subgroup $L$ reductive in $G$ such that $L$ acts properly on $G/H$ and $L \backslash G/H$ is compact. In the latter case, for any discrete cocompact subgroup $\Gamma ' \subset L,$ the space $\Gamma ' \backslash G/H$ is a compact Clifford-Klein form. Therefore it follows from Borel's theorem (see \cite{bor}) that any homogeneous space of reductive type admitting a standard compact Clifford-Klein form also admits a compact Clifford-Klein form. 
\newline
It is not known if the converse statement holds, but all known reductive homogeneous spaces $G/H$ admitting compact Clifford-Klein forms also admit standard compact Clifford-Klein forms.

As a corollary to Theorem \ref{twgl}, we get examples of the semisimple symmetric spaces without standard compact Clifford-Klein forms. In particular, we cannot find the first example in the existing literature:

\begin{corollary}
The homogeneous spaces $G/H=SL(2k+1, \mathbb{R})/SO(k-1,k+2)$ \linebreak and $G/H=SL(2k+1, \mathbb{R})/Sp(k-1,\mathbb{R})$ for $k \geq 5$ do not admit standard compact Clifford-Klein forms.
\label{co1}
\end{corollary}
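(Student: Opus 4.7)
The plan is to assume the existence of a standard compact Clifford--Klein form $L \backslash G/H$ and derive a contradiction via Theorem~\ref{twgl}. Such a form requires, by definition, a subgroup $L \subset G$ reductive in $G$ that acts properly on $G/H$ with compact quotient, so I would work with an arbitrary such $L$ and show that for $k \geq 5$ it cannot exist.

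First, I would extract from Table~\ref{tab1} the a-hyperbolic ranks of $\mathfrak{g}=\mathfrak{sl}(2k+1,\mathbb{R})$ and of $\mathfrak{h}$, where $\mathfrak{h}$ is either $\mathfrak{so}(k-1,k+2)$ or $\mathfrak{sp}(k-1,\mathbb{R})$. Theorem~\ref{twgl}, applied to the proper action of $L$ on $G/H$, then yields the upper bound
$$\mathop{\mathrm{rank}}\nolimits_{\text{\rm a-hyp}}(\mathfrak{l}) \leq \mathop{\mathrm{rank}}\nolimits_{\text{\rm a-hyp}}(\mathfrak{g}) - \mathop{\mathrm{rank}}\nolimits_{\text{\rm a-hyp}}(\mathfrak{h}) =: N(k),$$
and the threshold $k \geq 5$ in the statement signals that $N(k)$ is small enough in that range to become obstructive.

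Second, I would use the compactness of $L \backslash G/H$ to obtain a lower bound on the ``size'' of $\mathfrak{l}$. The standard dimension count for proper cocompact reductive actions on reductive homogeneous spaces pins down the noncompact dimension $\dim(L/K_L)$ in terms of $\dim(G/K_G) - \dim(H/K_H)$, which is sizeable for the $G$ and $H$ in question. Combining this with the classification of real semisimple Lie algebras produces a short list of candidate isomorphism types for $\mathfrak{l}$; I would then inspect Table~\ref{tab1} for each candidate and verify that $\mathop{\mathrm{rank}}\nolimits_{\text{\rm a-hyp}}(\mathfrak{l}) > N(k)$ whenever $k \geq 5$, contradicting the upper bound from step one.

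The main obstacle will be this last enumeration. The boundary case $k = 5$ is the tightest, and there the candidates with the smallest possible a-hyperbolic rank have to be excluded one by one; for $k > 5$ the required noncompact dimension of $\mathfrak{l}$ grows faster than $N(k)$, so a monotonicity/rank-growth observation based on Table~\ref{tab1} should dispose of the remaining cases once $k = 5$ is settled. A secondary subtlety is verifying that one is indeed working with the correct embeddings of $\mathfrak{h}$ in $\mathfrak{g}$ (so that the Cartan subspace $\mathfrak{a}_{\mathfrak{h}}$ of Theorem~\ref{twkob} lies in $\mathfrak{a}$), but this is standard for symmetric pairs.
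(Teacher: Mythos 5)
Your overall strategy is the paper's own: pair Theorem~\ref{twgl} (here $N(k)=1$, since $\mathop{\mathrm{rank}}\nolimits_{\text{a-hyp}}(\mathfrak{sl}(2k+1,\mathbb{R}))=k$ while both $\mathfrak{so}(k-1,k+2)$ and $\mathfrak{sp}(k-1,\mathbb{R})$ have a-hyperbolic rank $k-1$) with the cocompactness criterion $d(L)+d(H)=d(G)$ of Theorem~\ref{twkk}, and eliminate every admissible $\mathfrak{l}$. But your enumeration step, as described, does not close, for two concrete reasons. First, $\mathfrak{l}$ is only reductive in $G$, and the a-hyperbolic rank is by definition blind to the center ($\mathop{\mathrm{rank}}\nolimits_{\text{a-hyp}}(\mathfrak{l})=\mathop{\mathrm{rank}}\nolimits_{\text{a-hyp}}([\mathfrak{l},\mathfrak{l}])$): a large split center would pass your step-one bound while inflating $d(L)$. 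You need Corollary~\ref{coko} --- which your outline never invokes --- to cap the split central contribution by $\mathrm{rank}_{\mathbb{R}}(\mathfrak{l})\leq k+1$; this is exactly how the paper controls $\mathfrak{c}_{\mathfrak{l}}$.

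Second, and more seriously, ``the classification of real semisimple Lie algebras'' does not by itself produce a short list: the constraint $\mathop{\mathrm{rank}}\nolimits_{\text{a-hyp}}([\mathfrak{l},\mathfrak{l}])\leq 1$ is compatible with unbounded noncompact dimension in the abstract, since every real-rank-one algebra $\mathfrak{so}(1,n)$, $\mathfrak{su}(1,n)$, $\mathfrak{sp}(1,n)$ has a-hyperbolic rank exactly $1$ with $\dim(\mathfrak{p})$ as large as you please. So for each $k$ there are abstract candidates satisfying both your step-one bound and $d(L)=d(G)-d(H)$, and inspecting Table~\ref{tab1} yields no contradiction; your claim that every candidate has $\mathop{\mathrm{rank}}\nolimits_{\text{a-hyp}}(\mathfrak{l})>N(k)$ is false without embedding constraints. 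The subtlety you flag about embeddings concerns $\mathfrak{h}$, but the crux is the embedding of $\mathfrak{l}$. The paper's device is this: since each noncompact simple factor contributes at least $1$ to the a-hyperbolic rank, $[\mathfrak{l},\mathfrak{l}]$ has a single noncompact simple part $\mathfrak{s}$ with $\mathrm{rank}_{\mathbb{R}}(\mathfrak{s})\in\{1,2\}$ (Proposition~\ref{p1}); in the rank-two case Table~\ref{tab1} leaves only $\mathfrak{sl}(3,\mathbb{R})$, $\mathfrak{su}^{\ast}(6)$, $\mathfrak{sl}(3,\mathbb{C})$, $\mathfrak{e}_{6}^{\mathrm{IV}}$, so $\dim(\mathfrak{p}_{0})<27$; in the rank-one case Theorem~\ref{twy} makes $\mathfrak{s}$ reductive in $\mathfrak{g}$, so $\mathfrak{k}_{\mathfrak{s}}\subset\mathfrak{k}=\mathfrak{so}(2k+1)$ forces $\mathrm{rank}(\mathfrak{k}_{\mathfrak{s}})\leq k$, which is what excludes the unbounded families and gives $\dim(\mathfrak{p}_{\mathfrak{s}})<4k$. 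Altogether $d(L)<5k+1$ for $k\geq 6$ (and $d(L)<33$ for $k=5$), strictly below $d(G)-d(H)\geq k^{2}+2k+2$, contradicting Theorem~\ref{twkk}. Without the central bound and this rank-of-the-compact-part argument, your plan stalls precisely at the enumeration you acknowledge as the main obstacle.
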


\begin{remark}
Let us mention the following results, related to the above corollary.
\begin{itemize}
	\item T. Kobayashi proved in \cite{kobadm} that $SL(2k,\mathbb{R})/SO(k,k)$ for $k\geq 1$ and $SL(n,\mathbb{R})/Sp(l,\mathbb{R})$ \linebreak for $0<2l \leq n-2$ do not admit compact Clifford-Klein forms.
	\item Y. Benoist proved in \cite{ben} that $SL(2k+1,\mathbb{R})/SO(k,k+1)$ for $k\geq 1$ does not admit compact Clifford-Klein forms.
	\item Y. Morita proved recently in \cite{mor} that $SL(p+q,\mathbb{R})/SO(p,q)$ does not admit compact Clifford-Klein forms if $p$ and $q$ are both odd.
\end{itemize}
Note that these works are devoted to the problem of existence of compact Clifford-Klein forms on a given homogeneous space (not only standard compact Clifford-Klein forms).
\end{remark}

\section{The a-hyperbolic rank and antipodal hyperbolic orbits}

Let $\Sigma_{\mathfrak{g}}$ be a system of restricted roots for $\mathfrak{g}$ with respect to $\mathfrak{a}.$ Choose a system of positive roots $\Sigma^{+}_{\mathfrak{g}}$ for $\Sigma_{\mathfrak{g}}.$ Then a fundamental domain of the action of $W_{\mathfrak{g}}$ on $\mathfrak{a}$ can be define as
$$\mathfrak{a}^{+} := \{  X\in \mathfrak{a} \ |  \ \alpha (X) \geq 0 \ \text{\rm for any} \ \alpha \in \Sigma^{+}_{\mathfrak{g}} \}.$$
Note that 
$$sX+tY \in \mathfrak{a}^{+},$$
for any $s,t \geq 0$ and $X,Y\in \mathfrak{a}^{+}$. Therefore $\mathfrak{a}^{+}$ is a convex cone in the linear space $\mathfrak{a}.$
Let $w_{0} \in W_{\mathfrak{g}}$ be the longest element. One can show that
$$-w_{0}: \mathfrak{a} \rightarrow \mathfrak{a}, \ \ X \mapsto -(w_{0} \cdot X)$$
is an involutive automorphism of $\mathfrak{a}$ preserving $\mathfrak{a}^{+}.$ Let $\mathfrak{b} \subset \mathfrak{a}$ be the subspace of all fixed points of $-w_{0}$ and put
$$\mathfrak{b}^{+} := \mathfrak{b} \cap \mathfrak{a}^{+}.$$
Thus $\mathfrak{b}^{+}$ is a convex cone in $\mathfrak{a}.$ We also have $\mathfrak{b} = \text{Span} (\mathfrak{b}^{+}).$
\begin{definition}
The dimension of $\mathfrak{b}$ is called the a-hyperbolic rank of $\mathfrak{g}$ and is denoted by 
$$\mathop{\mathrm{rank}}\nolimits_{\text{a-hyp}} (\mathfrak{g}).$$
\label{dd2}
\end{definition}

\noindent
The a-hyperbolic ranks of the simple real Lie algebras can be deduce from Table \ref{tab1}.
\begin{center}
 \begin{table}[h]
 \centering
 {\footnotesize
 \begin{tabular}{| c | c | c |}
   \hline
   \multicolumn{3}{|c|}{ \textbf{\textit{A-HYPERBOLIC RANK}}} \\
   \hline                        
   $\mathfrak{g}$ & $\mathop{\mathrm{rank}}\nolimits_{\text{a-hyp}} (\mathfrak{g})$ & $\text{rank}_{\mathbb{R}} (\mathfrak{g})$ \\
   \hline
   $\mathfrak{sl}(2k,\mathbb{R})$  & $k$ &  $2k-1$ \\
   {\scriptsize $k\geq 2$} & & \\
   \hline
   $\mathfrak{sl}(2k+1,\mathbb{R})$  & $k$ & $2k$ \\
   {\scriptsize $k\geq 1$} & & \\
   \hline
   $\mathfrak{su}^{\ast}(4k)$  & $k$ & $2k-1$ \\
   {\scriptsize $k\geq 2$} & & \\
   \hline
   $\mathfrak{su}^{\ast}(4k+2)$  & $k$ & $2k$ \\
   {\scriptsize $k\geq 1$} & & \\
   \hline
   $\mathfrak{so}(2k+1,2k+1)$  & $2k$ & $2k+1$ \\
   {\scriptsize $k\geq 2$} & &  \\
   \hline
	 $\mathfrak{e}_{6}^{\text{I}}$ & 4 & 6 \\
	 \hline
   $\mathfrak{e}_{6}^{\text{IV}}$  & 1 & 2 \\
   \hline  
 \end{tabular}
 }

 \caption{
 This table contains all real forms of simple Lie algebras $\mathfrak{g}^{\mathbb{C}}$ for which $\text{rank}_{\mathbb{R}}(\mathfrak{g}) \neq \mathop{\mathrm{rank}}\nolimits_{\text{a-hyp}}(\mathfrak{g}).$ The notation is close to \cite{ov2}, Table 9, pages 312-316.
 }
 \label{tab1}
 \end{table}
\end{center}

\noindent
A method of calculation of the a-hyperbolic rank of a simple Lie algebra can be found in \cite{bt}. The a-hyperbolic rank of a semisimple Lie algebra equals the sum of a-hyperbolic ranks of all its simple parts. For a reductive Lie algebra $\mathfrak{g}$ we put
$$\mathop{\mathrm{rank}}\nolimits_{\text{a-hyp}} (\mathfrak{g}) := \mathop{\mathrm{rank}}\nolimits_{\text{a-hyp}}([\mathfrak{g},\mathfrak{g}]).$$

There is a close relation between $\mathfrak{b}^{+}$ and the set of antipodal hyperbolic orbits in $\mathfrak{g}.$ We say that an element $X \in \mathfrak{g}$ is {\it \textbf{hyperbolic}}, if $X$ is semisimple (that is, $\mathrm{ad}_{X}$ is diagonalizable) and all eigenvalues of $\mathrm{ad}_{X}$ are real.
\begin{definition}
An adjoint orbit $O_{X}:=\mathop{\mathrm{Ad}}\nolimits (G)(X)$ is said to be hyperbolic if $X$ (and therefore every element of $O_{X}$) is hyperbolic. An adjoint orbit $O_{Y}$ is antipodal if $-Y\in O_{Y}$ (and therefore for \linebreak every $Z\in O_{Y},$ $-Z\in O_{Y}$).
\end{definition} 

\begin{lemma}[c.f. Fact 5.1 and Lemma 5.3 in \cite{ok}]
There is a bijective correspondence between antipodal hyperbolic orbits $O_{X}$ in $\mathfrak{g}$ and elements $Y \in \mathfrak{b}^{+}.$ This correspondence is given by
$$\mathfrak{b}^{+}\ni Y \mapsto O_{Y}.$$
Furthermore, for every hyperbolic orbit $O_{X}$ in $\mathfrak{g}$ the set $O_{X} \cap \mathfrak{a}$ is a single $W_{\mathfrak{g}}$ orbit in $\mathfrak{a}$.
\label{lma}
\end{lemma}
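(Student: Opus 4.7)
The plan is to exploit the classical fact that every hyperbolic element of $\mathfrak{g}$ is $\mathrm{Ad}(G)$-conjugate to an element of $\mathfrak{a}$, and that $\mathfrak{a}^{+}$ is a strict fundamental domain for the $W_{\mathfrak{g}}$-action on $\mathfrak{a}$. Together these identify the set of hyperbolic orbits with $\mathfrak{a}^{+}$; it then remains to cut out the antipodal ones, and the involution $-w_{0}$ should detect exactly $\mathfrak{b}^{+}$.

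First I would establish the ``furthermore'' clause. Given hyperbolic $X\in\mathfrak{g}$, a standard argument via the Cartan decomposition shows that $X$ is $\mathrm{Ad}(G)$-conjugate to an element of $\mathfrak{p}$, and then via the $K$-conjugacy of maximal abelian subspaces of $\mathfrak{p}$ to an element of $\mathfrak{a}$. Conversely, if $X_{1}, X_{2}\in\mathfrak{a}$ are $\mathrm{Ad}(G)$-conjugate, the standard identification $W_{\mathfrak{g}} = N_{K}(\mathfrak{a})/Z_{K}(\mathfrak{a})$ combined with the fact that $Z_{G}(\mathfrak{a})$ acts trivially on $\mathfrak{a}$ forces them to be $W_{\mathfrak{g}}$-conjugate. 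Hence $O_{X}\cap\mathfrak{a}$ is a single $W_{\mathfrak{g}}$-orbit in $\mathfrak{a}$, and in particular meets the cone $\mathfrak{a}^{+}$ in exactly one point.

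Next, the bijection of the main statement. Since each hyperbolic orbit has a unique representative in $\mathfrak{a}^{+}$, the map $\mathfrak{a}^{+}\ni Y \mapsto O_{Y}$ is a bijection onto the set of hyperbolic orbits. To determine when $O_{Y}$ is antipodal, observe that $-Y$ and $-w_{0}\cdot Y$ lie in a common $W_{\mathfrak{g}}$-orbit (they differ by $w_{0}$), and that $-w_{0}\cdot Y \in \mathfrak{a}^{+}$ by the observation already recorded in the excerpt that $-w_{0}$ preserves $\mathfrak{a}^{+}$. Therefore $-w_{0}\cdot Y$ is the unique $\mathfrak{a}^{+}$-representative of $O_{-Y}$. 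Consequently $O_{Y}=O_{-Y}$ iff $Y = -w_{0}\cdot Y$ iff $Y\in\mathfrak{b}$; combined with $Y\in\mathfrak{a}^{+}$ this is exactly $Y\in\mathfrak{b}^{+}$, yielding the claimed bijection.

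The only genuine input is the structure-theoretic fact that hyperbolic $\mathrm{Ad}(G)$-orbits intersect $\mathfrak{a}$ in a single $W_{\mathfrak{g}}$-orbit; this is classical and may simply be invoked from \cite{ok}. The remaining content is formal manipulation with the involution $-w_{0}$ and the fundamental domain $\mathfrak{a}^{+}$, so I expect no real obstacle beyond being careful that ``antipodal'' is to be checked on $\mathfrak{a}^{+}$-representatives rather than on arbitrary orbit elements.
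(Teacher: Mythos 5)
Your proposal is correct, but there is nothing in the paper to compare it against: the paper gives no proof of this lemma at all, importing it wholesale from Okuda (Fact 5.1 and Lemma 5.3 in \cite{ok}), as the bracketed attribution signals. Your reconstruction is essentially the standard argument underlying the cited source: hyperbolic elements are $\mathop{\mathrm{Ad}}\nolimits(G)$-conjugate into $\mathfrak{a}$; $\mathop{\mathrm{Ad}}\nolimits(G)$-conjugacy between elements of $\mathfrak{a}$ reduces to $W_{\mathfrak{g}}$-conjugacy, so each hyperbolic orbit meets the strict fundamental domain $\mathfrak{a}^{+}$ in exactly one point; and antipodality is then tested on $\mathfrak{a}^{+}$-representatives via the involution $-w_{0}$, since $-w_{0}\cdot Y$ is the unique $\mathfrak{a}^{+}$-representative of $O_{-Y}$, giving $O_{Y}=O_{-Y}$ iff $Y\in\mathfrak{b}\cap\mathfrak{a}^{+}=\mathfrak{b}^{+}$. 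The only step you treat lightly is the reduction of $\mathop{\mathrm{Ad}}\nolimits(G)$-conjugacy to $W_{\mathfrak{g}}$-conjugacy: the phrase ``$Z_{G}(\mathfrak{a})$ acts trivially on $\mathfrak{a}$'' does not by itself produce the normalizing element, since the conjugating $g$ need not normalize $\mathfrak{a}$; the standard repair is to note that $\mathfrak{a}$ and $\mathop{\mathrm{Ad}}\nolimits_{g}\mathfrak{a}$ are both maximally split abelian subspaces of the reductive centralizer of the target element and are therefore conjugate within it, after which one lands in $N_{K}(\mathfrak{a})$. Since you explicitly allow invoking this input from \cite{ok}, the gap is cosmetic, and your closing caution---that antipodality must be checked on $\mathfrak{a}^{+}$-representatives rather than arbitrary orbit elements---is exactly the right point of care.
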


\section{The main result}

We need two basic facts from linear algebra.

\begin{lemma}
Let $V_{1},V_{2}$ be vector subspaces of a real linear space $V$ of finite dimension. Then
$$\text{\rm dim} (V_{1}+V_{2})= \text{\rm dim} (V_{1}) + \text{\rm dim} (V_{2}) - \text{\rm dim} (V_{1}\cap V_{2}).$$
\label{lma1}
\end{lemma}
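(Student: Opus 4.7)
The plan is to prove this standard dimension formula via the basis-extension argument. First I would choose a basis $\{e_{1}, \ldots, e_{k}\}$ of $V_{1}\cap V_{2}$, where $k=\dim(V_{1}\cap V_{2})$. Since $V_{1}\cap V_{2}$ sits inside both $V_{1}$ and $V_{2}$, I can extend this basis on the one hand to a basis $\{e_{1},\ldots,e_{k},f_{1},\ldots,f_{m}\}$ of $V_{1}$ (so $\dim V_{1}=k+m$) and on the other hand to a basis $\{e_{1},\ldots,e_{k},g_{1},\ldots,g_{n}\}$ of $V_{2}$ (so $\dim V_{2}=k+n$). The goal is then to show that the combined set $B=\{e_{1},\ldots,e_{k},f_{1},\ldots,f_{m},g_{1},\ldots,g_{n}\}$ is a basis of $V_{1}+V_{2}$, from which the equality follows by counting: $k+m+n = (k+m)+(k+n)-k$.

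That $B$ spans $V_{1}+V_{2}$ is immediate, since any element of the sum is a sum of an element of $V_{1}$ (expressible in the $e_{i},f_{j}$) and one of $V_{2}$ (expressible in the $e_{i},g_{l}$). The only real step is linear independence. The plan is to assume a relation $\sum a_{i}e_{i}+\sum b_{j}f_{j}+\sum c_{l}g_{l}=0$ and rewrite it as $\sum c_{l}g_{l}=-\sum a_{i}e_{i}-\sum b_{j}f_{j}$. The left-hand side lies in $V_{2}$ and the right-hand side in $V_{1}$, so this common vector belongs to $V_{1}\cap V_{2}$ and is therefore a combination of the $e_{i}$ alone. Comparing with the expansion of $\sum c_{l}g_{l}$ in the basis of $V_{2}$ and using the linear independence of $\{e_{1},\ldots,e_{k},g_{1},\ldots,g_{n}\}$ forces every $c_{l}=0$. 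The relation then collapses to $\sum a_{i}e_{i}+\sum b_{j}f_{j}=0$, and independence of the basis of $V_{1}$ yields $a_{i}=b_{j}=0$. There is no real obstacle here: the argument is entirely formal linear algebra, and the only point requiring care is the two-sided containment argument that lets one conclude the $c_{l}$ coefficients vanish.
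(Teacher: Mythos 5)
Your proof is correct and complete: the basis-extension argument, including the two-sided containment step that forces the coefficients $c_{l}$ to vanish, is exactly the canonical proof of this dimension formula. The paper itself offers no proof at all --- it states the lemma as one of ``two basic facts from linear algebra'' --- so your argument simply supplies the standard justification the authors took for granted.
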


\begin{lemma}
Let $V_{1},...,V_{n}$ be a collection of vector subspaces of a real linear space $V$ of a finite dimension and let $A^{+} \subset V$ be a convex cone. Assume that
$$A^{+} \subset \bigcup_{k=1}^{n} V_{k}.$$
Then there exists a number $k,$ such that $A^{+} \subset V_{k}.$ 
\label{lma2}
\end{lemma}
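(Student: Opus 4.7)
The plan is to prove Lemma 2 by induction on $n$, with the key geometric input being that along a ray $x+ty$ inside the cone, infinitely many points are distributed among only finitely many subspaces, so two of them must land in the same subspace.

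The base case $n=1$ is immediate. For the inductive step, I would assume the statement for $n-1$ and suppose $A^{+}\subset V_{1}\cup\cdots\cup V_{n}$. If $A^{+}\subset V_{2}\cup\cdots\cup V_{n}$, we are done by the inductive hypothesis. Otherwise, there exists some $x\in A^{+}$ with $x\notin V_{2}\cup\cdots\cup V_{n}$, and consequently $x\in V_{1}$. The claim is then that every $y\in A^{+}$ must also lie in $V_{1}$.

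To verify this, fix an arbitrary $y\in A^{+}$ and consider the ray $\{x+ty:t\geq 0\}$. Since $A^{+}$ is a convex cone, $x+ty\in A^{+}$ for every $t\geq 0$, hence each such point lies in some $V_{k}$. As the parameter set $[0,\infty)$ is infinite while the covering consists of only $n$ subspaces, the pigeonhole principle produces two distinct values $t_{1}\neq t_{2}$ and an index $k$ with $x+t_{1}y,\,x+t_{2}y\in V_{k}$. Subtracting inside the subspace $V_{k}$ gives $(t_{1}-t_{2})y\in V_{k}$, so $y\in V_{k}$, and then $x=(x+t_{1}y)-t_{1}y\in V_{k}$ as well. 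By the choice of $x$, the inclusion $x\in V_{k}$ forces $k=1$, so $y\in V_{1}$. Since $y$ was arbitrary, $A^{+}\subset V_{1}$, completing the induction.

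I do not expect a serious obstacle here; the only subtlety worth double-checking is that the pigeonhole step is legitimate. The ray $\{x+ty:t\geq 0\}$ has infinitely many points and the map sending $t$ to an index $k(t)$ with $x+ty\in V_{k(t)}$ must repeat some value on an infinite subset, which is enough to produce the two distinct parameters needed. The convex-cone hypothesis is used exactly once, to guarantee $x+ty\in A^{+}$, and the subspace structure of each $V_{k}$ is used exactly once, to allow the subtraction that isolates $y$.
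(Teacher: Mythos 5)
Your proof is correct. There is nothing in the paper to compare it against: the authors state Lemma \ref{lma2} without proof, as one of ``two basic facts from linear algebra,'' so your argument simply supplies the omitted justification, and it does so by the standard route. Each step checks out: the convex-cone property in the paper's sense ($sX+tY\in A^{+}$ for all $s,t\geq 0$ and $X,Y\in A^{+}$) gives exactly the closure $x+ty\in A^{+}$ that you need; the pigeonhole step is legitimate since infinitely many parameters $t\in[0,\infty)$ are distributed among $n$ indices (in fact the finite set $t=0,1,\dots,n$ already suffices, so no infinite pigeonhole is required); and the two subtractions inside the subspace $V_{k}$ correctly isolate first $y$ and then $x$, after which $x\notin V_{2}\cup\cdots\cup V_{n}$ forces $k=1$. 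The degenerate cases are harmless: if $y=0$ the conclusion is trivial since every subspace contains $0$, and the base case $n=1$ is the hypothesis itself. One small stylistic remark: the induction can be compressed, since the dichotomy ``either the first subspace is redundant in the cover or some point of $A^{+}$ avoids all the others'' is all that is used, but this does not affect correctness.
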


We also need the following, technical lemma. Choose a subalgebra $\mathfrak{h}$ reductive in $\mathfrak{g}$ which corresponds to a Lie group $H \subset G.$ Let $\mathfrak{b}_{[\mathfrak{h},\mathfrak{h}]}^{+} \subset \mathfrak{a}_{\mathfrak{h}}$ be the convex cone constructed according to the procedure described in the previous subsection (for  $[\mathfrak{h},\mathfrak{h}]$).

\begin{lemma}
Let $X\in \mathfrak{b}_{[\mathfrak{h},\mathfrak{h}]}^{+}.$ The orbit $O_{X}:=\mathop{\mathrm{Ad}}\nolimits (G)(X)$ is an antipodal hyperbolic orbit in $\mathfrak{g}.$
\label{lma3}
\end{lemma}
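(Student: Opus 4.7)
The plan is to verify directly the two properties defining an antipodal hyperbolic orbit in $\mathfrak{g}$: that $X$ is hyperbolic as an element of $\mathfrak{g}$, and that $-X\in O_X$. Both will follow from tracing the element $X$ back and forth along the conjugation $\mathop{\mathrm{Ad}}\nolimits_a$ used to define $\mathfrak{a}_{\mathfrak{h}}$, and then invoking Lemma \ref{lma} applied to the semisimple subalgebra $[\mathfrak{h},\mathfrak{h}]$.

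\textbf{Hyperbolicity.} By the construction recalled in Section 1, $\mathfrak{a}_{\mathfrak{h}}=\mathop{\mathrm{Ad}}\nolimits_a(\mathfrak{a}_1)\subset \mathfrak{a}\subset \mathfrak{p}$, and by hypothesis $\mathfrak{b}^{+}_{[\mathfrak{h},\mathfrak{h}]}\subset \mathfrak{a}_{\mathfrak{h}}$. Hence $X\in \mathfrak{p}$. For any $Y\in \mathfrak{p}$ the operator $\mathrm{ad}_{\mathfrak{g}}Y$ is symmetric with respect to the positive-definite inner product $B_{\theta}(U,V):=-B(U,\theta V)$ obtained from the Killing form $B$ of $\mathfrak{g}$ and the Cartan involution $\theta$; in particular it is diagonalizable over $\mathbb{R}$. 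Thus $X$ is hyperbolic in $\mathfrak{g}$.

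\textbf{Antipodal property.} Let $H'\subset G$ be the analytic subgroup with Lie algebra $[\mathfrak{h},\mathfrak{h}]$, and set $X':=\mathop{\mathrm{Ad}}\nolimits_{a^{-1}}(X)\in \mathfrak{a}_1$. Then $X'$ lies in the intrinsic $\mathfrak{b}^{+}$ cone of $[\mathfrak{h},\mathfrak{h}]$ constructed inside $\mathfrak{a}_1$ (its image under $\mathop{\mathrm{Ad}}\nolimits_a$ is precisely what the statement calls $\mathfrak{b}^{+}_{[\mathfrak{h},\mathfrak{h}]}$). Applying Lemma \ref{lma} to $[\mathfrak{h},\mathfrak{h}]$ and $H'$ yields $-X'\in \mathop{\mathrm{Ad}}\nolimits(H')(X')$, and conjugating by $a$ gives $-X\in \mathop{\mathrm{Ad}}\nolimits(aH'a^{-1})(X)\subset \mathop{\mathrm{Ad}}\nolimits(G)(X)=O_X$, as required.

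\textbf{Main obstacle.} The only real subtlety is the bookkeeping around $\mathop{\mathrm{Ad}}\nolimits_a$: the cone $\mathfrak{b}^{+}_{[\mathfrak{h},\mathfrak{h}]}$ is stored inside the ambient $\mathfrak{a}\subset \mathfrak{g}$ via $\mathfrak{a}_{\mathfrak{h}}$, while Lemma \ref{lma} is naturally stated for orbits inside the reductive algebra whose cone it describes. Once one identifies $\mathfrak{b}^{+}_{[\mathfrak{h},\mathfrak{h}]}$ with the intrinsic cone of $[\mathfrak{h},\mathfrak{h}]$ via the inverse of $\mathop{\mathrm{Ad}}\nolimits_a$, both the hyperbolicity (which would a priori only give diagonalizability of $\mathrm{ad}_{[\mathfrak{h},\mathfrak{h}]}X'$, not $\mathrm{ad}_{\mathfrak{g}}X$) and the antipodality upgrade from $[\mathfrak{h},\mathfrak{h}]$ to $\mathfrak{g}$ essentially for free, because $\mathfrak{a}_{\mathfrak{h}}\subset \mathfrak{p}$ and because the $H'$-orbit is contained in the $G$-orbit.
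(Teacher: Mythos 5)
Your proposal is correct and takes essentially the same approach as the paper: hyperbolicity of $X$ follows from $X\in \mathfrak{b}^{+}_{[\mathfrak{h},\mathfrak{h}]}\subset \mathfrak{a}_{\mathfrak{h}}\subset \mathfrak{a}$ (the paper simply cites that $\mathfrak{a}$ consists of hyperbolic vectors, where you justify this via self-adjointness of $\mathrm{ad}_{Y}$ for $Y\in\mathfrak{p}$ with respect to $B_{\theta}$), and antipodality follows from Lemma \ref{lma} applied to the semisimple part of $\mathfrak{h}$, with the resulting conjugation $\mathop{\mathrm{Ad}}\nolimits_{h}(X)=-X$ absorbed into the $G$-orbit. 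Your explicit $\mathop{\mathrm{Ad}}\nolimits_{a}$ bookkeeping and the use of the analytic subgroup $H'$ merely spell out details the paper's proof leaves implicit.
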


\begin{proof}
By Lemma \ref{lma} the vector $X$ defines an antipodal hyperbolic orbit in $\mathfrak{h}.$ Therefore we can find $h \in H \subset G$ such that
$\text{\rm Ad}_{h}(X) = - X$. Since a maximally split abelian subspace $\mathfrak{a} \subset \mathfrak{g}$ consists of vectors for which $\mathrm{ad}$ is diagonalizable with real values and
$$X \in \mathfrak{b}_{[\mathfrak{h},\mathfrak{h}]}^{+} \subset \mathfrak{a}_{\mathfrak{h}} \subset \mathfrak{a},$$
thus the vector $X$ is hyperbolic in $\mathfrak{g}.$ It follows that $\mathop{\mathrm{Ad}}\nolimits (G)(X)$ is a hyperbolic orbit in $\mathfrak{g}$ \linebreak and $-X \in \mathop{\mathrm{Ad}}\nolimits (G)(X).$
\end{proof}
Now we are ready to give a proof of Theorem \ref{twgl}.

\begin{proof}
Assume that $\mathop{\mathrm{rank}}\nolimits_{\text{a-hyp}}(\mathfrak{l}) + \mathop{\mathrm{rank}}\nolimits_{\text{a-hyp}}(\mathfrak{h}) > \mathop{\mathrm{rank}}\nolimits_{\text{a-hyp}} (\mathfrak{g})$ and let $\mathfrak{b}_{[\mathfrak{h},\mathfrak{h}]}^{+},$ $\mathfrak{b}_{[\mathfrak{l},\mathfrak{l}]}^{+},$ $\mathfrak{b}^{+}$ be appropriate convex cones. If $X \in \mathfrak{b}_{[\mathfrak{h},\mathfrak{h}]}^{+}$ then $O_{X}^{H} := \mathop{\mathrm{Ad}}\nolimits (G)(X)$ is an antipodal hyperbolic orbit in $\mathfrak{h}.$ By Lemma \ref{lma3} the orbit $O_{X}^{G} := \mathop{\mathrm{Ad}}\nolimits (G)(X)$ is an antipodal hyperbolic orbit in $\mathfrak{g}.$ By Lemma \ref{lma} there exists $Y \in \mathfrak{b}^{+}$ such that
$$O_{X}^{G} = O_{Y}^{G} = \mathop{\mathrm{Ad}}\nolimits (G)(Y).$$
Since $\mathfrak{b}_{[\mathfrak{h},\mathfrak{h}]}^{+} \subset \mathfrak{a}_{\mathfrak{h}} \subset \mathfrak{a}$ and $\mathfrak{b}^{+} \subset \mathfrak{a}$ thus (according to Lemma \ref{lma}) we get $X=w_{1} \cdot Y$ for a certain $w_{1} \in W_{\mathfrak{g}}.$ Therefore
$$\mathfrak{b}_{[\mathfrak{h},\mathfrak{h}]}^{+} \subset W_{\mathfrak{g}} \cdot \mathfrak{b}^{+} = \bigcup_{w\in W_{\mathfrak{g}}} w \cdot \mathfrak{b}^{+} \subset \bigcup_{w\in W_{\mathfrak{g}}} w \cdot \mathfrak{b}.$$
Analogously
$$\mathfrak{b}_{[\mathfrak{l},\mathfrak{l}]}^{+} \subset \bigcup_{w\in W_{\mathfrak{g}}} w \cdot \mathfrak{b}^{+} \subset \bigcup_{w\in W_{\mathfrak{g}}} w \cdot \mathfrak{b}.$$
By Lemma \ref{lma2} there exist $w_{\mathfrak{h}},w_{\mathfrak{l}} \in W_{\mathfrak{g}}$ such that
$$\mathfrak{b}_{[\mathfrak{h},\mathfrak{h}]}^{+} \subset w_{\mathfrak{h}}^{-1} \cdot \mathfrak{b} \ \ \text{\rm and} \ \ \mathfrak{b}_{[\mathfrak{l},\mathfrak{l}]}^{+} \subset w_{\mathfrak{l}}^{-1} \cdot \mathfrak{b},$$
because $W_{\mathfrak{g}}$ acts on $\mathfrak{a}$ by linear transformations. Therefore
$$\mathfrak{b}_{[\mathfrak{h},\mathfrak{h}]} \subset w_{\mathfrak{h}}^{-1} \cdot \mathfrak{b} \ \ \text{\rm and} \ \ \mathfrak{b}_{[\mathfrak{l},\mathfrak{l}]} \subset w_{\mathfrak{l}}^{-1} \cdot \mathfrak{b}$$
where $\mathfrak{b}_{[\mathfrak{h},\mathfrak{h}]} := \text{Span}(\mathfrak{b}_{[\mathfrak{h},\mathfrak{h}]}^{+})$ and $\mathfrak{b}_{[\mathfrak{l},\mathfrak{l}]} := \text{Span} (\mathfrak{b}_{[\mathfrak{l},\mathfrak{l}]}^{+}).$ We obtain
$$w_{\mathfrak{h}} \cdot \mathfrak{b}_{[\mathfrak{h},\mathfrak{h}]} \subset \mathfrak{b} , \ w_{\mathfrak{l}} \cdot \mathfrak{b}_{[\mathfrak{l},\mathfrak{l}]} \subset \mathfrak{b}.$$
By the assumption and Lemma \ref{lma1}
$$\text{\rm dim}(w_{\mathfrak{h}} \cdot \mathfrak{b}_{[\mathfrak{h},\mathfrak{h}]} \cap w_{\mathfrak{l}} \cdot \mathfrak{b}_{[\mathfrak{l},\mathfrak{l}]}) >0.$$
Choose $0 \neq Y \in w_{\mathfrak{h}} \cdot \mathfrak{b}_{[\mathfrak{h},\mathfrak{h}]} \cap w_{\mathfrak{l}} \cdot \mathfrak{b}_{[\mathfrak{l},\mathfrak{l}]}.$ Then
$$w_{\mathfrak{l}} \cdot X_{\mathfrak{l}}=Y= w_{\mathfrak{h}} \cdot X_{\mathfrak{h}} \ \text{\rm for some} \ X_{\mathfrak{h}} \in \mathfrak{b}_{[\mathfrak{h},\mathfrak{h}]}\backslash \{ 0 \}  \ \text{\rm and} \ X_{\mathfrak{l}} \in \mathfrak{b}_{[\mathfrak{l},\mathfrak{l}]}\backslash \{  0 \}. $$
Take $w_{2} := w_{\mathfrak{h}}^{-1}w_{\mathfrak{l}} \in W_{\mathfrak{g}},$ we have $X_{\mathfrak{h}}= w_{2} \cdot X_{\mathfrak{l}}$ and $X_{\mathfrak{h}} \in \mathfrak{a}_{\mathfrak{h}}, \ X_{\mathfrak{l}} \in \mathfrak{a}_{\mathfrak{l}}.$ Thus $0 \neq X_{\mathfrak{h}} \in w_{2} \cdot \mathfrak{a}_{\mathfrak{l}} \cap \mathfrak{a}_{\mathfrak{h}}.$ The assertion follows from Theorem \ref{twkob}. 
\end{proof}

We can proceed to a proof of Corollary \ref{co1}. For a reductive Lie group $D$ with a Lie algebra $\mathfrak{d}$ with a Cartan decomposition
$$\mathfrak{d} = \mathfrak{k}_{\mathfrak{d}} + \mathfrak{p}_{\mathfrak{d}}$$
we define $d(G) := \text{dim} (\mathfrak{p}_{\mathfrak{d}}).$ We will need the following properties
\begin{theorem}[Theorem 4.7 in \cite{kob2}]
Let $L$ be a subgroup reductive in $G$ acting properly on $G/H.$ The space $L \backslash G /H$ is compact if and only if 
$$d(L)+d(H)=d(G).$$
\label{twkk}
\end{theorem}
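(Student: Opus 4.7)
The plan is to reduce compactness of the double coset $L\backslash G/H$ to a linear-algebra statement about the $\theta$-stable subspaces $\mathfrak{p}_\mathfrak{h},\mathfrak{p}_\mathfrak{l}\subset\mathfrak{p}$, using Theorem \ref{twkob} as the main nontrivial input. After replacing $H$ and $L$ by $G$-conjugates I would arrange that a single Cartan involution $\theta$ preserves both, giving compatible decompositions $\mathfrak{g}=\mathfrak{k}+\mathfrak{p}$, $\mathfrak{h}=\mathfrak{k}_\mathfrak{h}+\mathfrak{p}_\mathfrak{h}$, $\mathfrak{l}=\mathfrak{k}_\mathfrak{l}+\mathfrak{p}_\mathfrak{l}$ with $\mathfrak{p}_\mathfrak{h},\mathfrak{p}_\mathfrak{l}\subset\mathfrak{p}$. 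The target identity $d(L)+d(H)=d(G)$ then reads $\dim\mathfrak{p}_\mathfrak{l}+\dim\mathfrak{p}_\mathfrak{h}=\dim\mathfrak{p}$.

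The central step is a Mostow-type retraction replacing the $L$-action on $G/H$ by a linear Cartan-motion model. Using $G\cong K\cdot\exp(\mathfrak{p})$ together with the analogous decompositions for $H$ and $L$, one realizes $G/H$ as the homogeneous vector bundle $K\times_{K_\mathfrak{h}}(\mathfrak{p}/\mathfrak{p}_\mathfrak{h})$ and realizes the $L$-action, up to bounded deformation, as the linear action of the motion group $K_\mathfrak{l}\ltimes\mathfrak{p}_\mathfrak{l}$. The resulting double quotient fibers over the compact orbifold $K_\mathfrak{l}\backslash K/K_\mathfrak{h}$ with fibers isomorphic to $\mathfrak{p}/(\mathfrak{p}_\mathfrak{l}+\mathrm{Ad}(k)\mathfrak{p}_\mathfrak{h})$, so its compactness is equivalent to $\mathfrak{p}_\mathfrak{l}+\mathrm{Ad}(k)\mathfrak{p}_\mathfrak{h}=\mathfrak{p}$ for every $k\in K$. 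By Kostant's convexity theorem every $\mathrm{Ad}(K)$-orbit in $\mathfrak{p}$ meets $\mathfrak{a}$ in a single $W_\mathfrak{g}$-orbit, so Theorem \ref{twkob} (applied to the properness hypothesis) forces $\mathfrak{p}_\mathfrak{l}\cap\mathrm{Ad}(k)\mathfrak{p}_\mathfrak{h}=\{0\}$ for all $k$. By Lemma \ref{lma1} these two conditions (sum equal to $\mathfrak{p}$ and zero intersection) together amount exactly to $\dim\mathfrak{p}_\mathfrak{l}+\dim\mathfrak{p}_\mathfrak{h}=\dim\mathfrak{p}$, which gives both directions of the theorem simultaneously.

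The hard part will be justifying the Mostow reduction itself -- i.e.\ that compactness of $L\backslash G/H$ is genuinely equivalent to compactness of the motion-group double quotient. I would handle this through the Cartan projection $\mu:G\to\mathfrak{a}^+$: by the Benoist-Kobayashi theory, $\mu(L)$ and $\mu(H)$ coarsely coincide with the $W_\mathfrak{g}$-orbits of $\mathfrak{a}_\mathfrak{l}^+$ and $\mathfrak{a}_\mathfrak{h}^+$, and $L\backslash G/H$ is compact precisely when the product $\mu(L)\cdot\mu(H)$ covers $\mathfrak{a}^+$ up to bounded error. Translating this coarse-geometric statement into the pointwise linear-algebra condition on $\mathfrak{p}$ above is the delicate technical point; once it is in place, the remaining bookkeeping is elementary linear algebra of the kind used in Lemma \ref{lma1}.
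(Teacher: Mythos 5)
The paper itself offers no proof of this statement: it is quoted verbatim as Theorem 4.7 of \cite{kob2}, so your proposal must be measured against Kobayashi's original argument. Parts of your reduction are sound: after conjugating one may assume a common Cartan involution, and properness together with Theorem \ref{twkob} does force $\mathfrak{p}_{\mathfrak{l}} \cap \mathrm{Ad}(k)\,\mathfrak{p}_{\mathfrak{h}} = \{0\}$ for all $k \in K$ (conjugate each side into $\mathfrak{a}_{\mathfrak{l}}$, $\mathfrak{a}_{\mathfrak{h}}$ by $K_{\mathfrak{l}}$, $K_{\mathfrak{h}}$ and use that $K$-conjugate elements of $\mathfrak{a}$ are $W_{\mathfrak{g}}$-conjugate --- which, incidentally, is the fact recorded in the last sentence of Lemma \ref{lma}, not Kostant's convexity theorem). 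The genuine gap is the load-bearing equivalence you assume: that compactness of $L \backslash G/H$ is detected by the flat Cartan-motion model, i.e.\ by the pointwise condition $\mathfrak{p}_{\mathfrak{l}} + \mathrm{Ad}(k)\,\mathfrak{p}_{\mathfrak{h}} = \mathfrak{p}$ for all $k$. That equivalence essentially \emph{is} the theorem, and your justification does not hold up. ``Up to bounded deformation'' fails outright: $\exp : \mathfrak{p} \to G$ distorts the metric exponentially, so $L$-orbits in $G/H$ are not within bounded distance of motion-group orbits. Your fallback through the Cartan projection $\mu$ compares subsets of $\mathfrak{a}^{+}$, whose dimensions are real ranks, while the identity $d(L)+d(H)=d(G)$ concerns $\dim \mathfrak{p}_{\mathfrak{l}}$, $\dim \mathfrak{p}_{\mathfrak{h}}$, $\dim \mathfrak{p}$; no argument is given bridging ``$W_{\mathfrak{g}}\cdot\mathfrak{a}_{\mathfrak{l}}^{+} + W_{\mathfrak{g}}\cdot\mathfrak{a}_{\mathfrak{h}}^{+}$ coarsely covers $\mathfrak{a}^{+}$'' with the dimension count in $\mathfrak{p}$, and you concede this translation is ``the delicate technical point'' --- that is, the core of the proof is missing, in both directions.

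For comparison, Kobayashi's actual proof avoids any flat-model or coarse-geometric comparison and runs through cohomological dimension theory: by Borel's theorem \cite{bor} choose a torsion-free cocompact discrete subgroup $\Gamma \subset L$, so that $L \backslash G/H$ is compact if and only if $\Gamma \backslash G/H$ is; since $\Gamma$ acts properly discontinuously and freely, $\Gamma \backslash G/H$ is a manifold, and using the Mostow fibration $G/H \cong K \times_{K \cap H} \mathbb{R}^{d(G)-d(H)}$ one shows it is compact precisely when $\mathrm{cd}(\Gamma) = d(G) - d(H)$, while $\mathrm{cd}(\Gamma) = d(L)$ because $\Gamma \backslash L/(L \cap K)$ is a compact aspherical manifold of dimension $d(L)$. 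It is this topological mechanism --- in particular for the implication that the dimension identity forces compactness --- for which your sketch offers no substitute; the elementary linear algebra of Lemma \ref{lma1} only handles the bookkeeping once that mechanism is in place.
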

\begin{theorem}[\cite{yo}] 
If $J \subset G$ is a semisimple subgroup then it is reductive in $G.$
\label{twy}
\end{theorem}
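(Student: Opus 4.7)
The plan is to prove this classical Mostow--Karpelevich result geometrically, using the CAT(0) Riemannian symmetric space of $G$. Concretely, by Definition~\ref{def1} we must produce a Cartan involution $\theta$ of $\mathfrak{g}$ that preserves the (semisimple) Lie algebra $\mathfrak{j}$ of $J$.

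First I would reduce to the case where $\mathfrak{g}$ is semisimple, which is harmless because the centre of $\mathfrak{g}$ is preserved by every Cartan involution and does not interact with $\mathfrak{j}$. Then I would fix an arbitrary Cartan involution $\theta_{0}$ of $\mathfrak{g}$ with decomposition $\mathfrak{g} = \mathfrak{k}_{0} + \mathfrak{p}_{0}$ and corresponding maximal compact subgroup $K_{0} \subset G$, and work in the Riemannian symmetric space $X = G/K_{0}$. This space is a complete, simply connected Riemannian manifold of non-positive sectional curvature, i.e.\ a CAT(0) space, and its points are in bijection with Cartan involutions of $\mathfrak{g}$; producing a Cartan involution compatible with $\mathfrak{j}$ is therefore equivalent to producing a suitable point of $X$.

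Since $\mathfrak{j}$ is semisimple, it admits its own Cartan decomposition $\mathfrak{j} = \mathfrak{u} + \mathfrak{q}$, and the connected subgroup $U \subset J$ integrating $\mathfrak{u}$ is compact. Applying the Cartan fixed-point theorem to the $U$-action on the CAT(0) space $X$ gives a fixed point $x_{0} \in X$; after conjugating $\mathfrak{j}$ by an element of $G$ that moves $x_{0}$ to the base point, I may assume $\mathfrak{u} \subset \mathfrak{k}_{0}$, so that $\theta_{0}$ at least fixes $\mathfrak{u}$ pointwise.

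The main obstacle is upgrading this to $\theta_{0}(\mathfrak{j}) = \mathfrak{j}$, i.e.\ showing $\theta_{0}(\mathfrak{q}) \subset \mathfrak{j}$; $\mathfrak{u} \subset \mathfrak{k}_{0}$ alone is not enough. The idea is to continue minimising within the (totally geodesic, CAT(0)) fixed-point locus $X^{U}$: I would introduce a convex proper function $f \colon X^{U} \to \mathbb{R}_{\geq 0}$ measuring the failure of the Cartan involution at $x$ to preserve $\mathfrak{q}$, built for instance from a weighted sum of squared $B_{\theta_{x}}$-distances of the $\mathrm{Ad}$-translates of a basis of $\mathfrak{q}$ from $\mathfrak{j}$. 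Convexity of distance functions on CAT(0) spaces, combined with semisimplicity of $\mathfrak{j}$ (which controls asymptotic behaviour along the $\mathfrak{p}_{0}$-directions and yields properness of $f$), guarantees a minimum $x_{1}$; a variational computation at $x_{1}$ then forces $\theta_{x_{1}}(\mathfrak{j}) = \mathfrak{j}$, and conjugating back recovers the required Cartan involution. The delicate step throughout is the verification of properness of $f$ on the possibly non-compact set $X^{U}$, and it is precisely here that semisimplicity of $\mathfrak{j}$ (rather than mere reductivity) is essential---without it the minimising sequence could escape to infinity.
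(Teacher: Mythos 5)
The paper does not actually prove this statement: it is imported wholesale from Yosida \cite{yo} (the result is nowadays usually attributed to Mostow and Karpelevich), so your attempt has to stand entirely on its own. The first half of it essentially does. The reduction to semisimple $\mathfrak{g}$, the identification of points of $X=G/K_{0}$ with Cartan involutions, and the use of the Cartan fixed-point theorem to arrange $\mathfrak{u}\subset\mathfrak{k}_{0}$ are all sound, with one small caveat: compactness of the analytic subgroup $U$ integrating $\mathfrak{u}$ is not free (it uses that $J$ is linear, hence has finite centre, and that $U$ is closed). The harmless fix is to note that $\mathfrak{u}$ sits inside the compact real form $\mathfrak{u}+i\mathfrak{q}$ of $\mathfrak{j}_{\mathbb{C}}$ and to apply the fixed-point theorem to the compact closure $\overline{U}$, whose fixed points are a fortiori fixed by $U$.

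The genuine gap is your ``main obstacle'' step, which is exactly where the theorem lives and which your sketch leaves blank. The functional $f$ on $X^{U}$ is never defined in a usable way: a weighted sum of squared $B_{\theta_{x}}$-distances of $\mathrm{Ad}$-translates of a basis of $\mathfrak{q}$ from $\mathfrak{j}$ measures distance to a fixed subspace in a \emph{varying} inner product, and such a function is not among the standard geodesically convex functions on a CAT(0) space (distances to points or convex sets, Busemann functions); its convexity is asserted, not proved, and is doubtful as stated. Properness, which you yourself flag as delicate, is likewise only asserted. Most seriously, even granting a minimiser $x_{1}$, nothing you say shows $f(x_{1})=0$: a nonnegative convex proper function can have a strictly positive minimum, and the claim that ``a variational computation forces $\theta_{x_{1}}(\mathfrak{j})=\mathfrak{j}$'' is precisely the content of the theorem, with the computation omitted. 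The classical way to make your plan work is Mostow's trace-form minimisation, in modern language the Kempf--Ness/Richardson--Slodowy minimal-vector argument: let $d=\dim\mathfrak{j}$, put $v_{\mathfrak{j}}:=e_{1}\wedge\cdots\wedge e_{d}\in\Lambda^{d}\mathfrak{g}$ for a basis $(e_{i})$ of $\mathfrak{j}$, and minimise $g\mapsto\|\mathrm{Ad}(g)\,v_{\mathfrak{j}}\|^{2}$ in a fixed $\theta_{0}$-compatible inner product. Here convexity genuinely holds ($t\mapsto\log\|\mathrm{Ad}(\exp tY)v\|$ is convex for $Y\in\mathfrak{p}_{0}$), semisimplicity enters through $\mathfrak{j}=[\mathfrak{j},\mathfrak{j}]$ to guarantee closedness of the orbit and hence attainment of the minimum, and the first-order condition at a minimal vector shows that the corresponding translate $\mathrm{Ad}_{g}\mathfrak{j}$ is $\theta_{0}$-stable; no preliminary restriction to $X^{U}$ is needed. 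Without some such concrete functional and the accompanying criticality computation, your outline is a plan rather than a proof.
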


\begin{proposition}
Let $L \subset G$ be a semisimple Lie group acting properly on  \linebreak $G/H=SL(2k+1, \mathbb{R})/SO(k-1,k+2)$ or $G/H=SL(2k+1, \mathbb{R})/Sp(k-1,\mathbb{R}).$ Then
$$\text{\rm rank}_{\mathbb{R}}(\mathfrak{l}) \leq 2.$$ 
\label{p1}
\end{proposition}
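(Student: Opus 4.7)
The plan is to apply Theorem \ref{twgl}, to compute the a-hyperbolic ranks of $\mathfrak{g}$ and $\mathfrak{h}$ explicitly, and then to convert the resulting bound on $\mathop{\mathrm{rank}}\nolimits_{\text{a-hyp}}(\mathfrak{l})$ into a bound on $\mathop{\mathrm{rank}}\nolimits_{\mathbb{R}}(\mathfrak{l})$ through a short case analysis on the simple ideals of $\mathfrak{l}$, invoking the classification of simple real Lie algebras.

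First I would read off the three a-hyperbolic ranks. Table \ref{tab1} gives $\mathop{\mathrm{rank}}\nolimits_{\text{a-hyp}}(\mathfrak{sl}(2k+1,\mathbb{R})) = k$. For $\mathfrak{h} = \mathfrak{so}(k-1,k+2)$ the strict inequality $k-1 < k+2$ forces the restricted root system on $\mathfrak{a}_{\mathfrak{h}}$ to be of type $B_{k-1}$, whose longest Weyl element acts as $-\mathrm{id}$; consequently $\mathfrak{b}_{[\mathfrak{h},\mathfrak{h}]} = \mathfrak{a}_{\mathfrak{h}}$ and $\mathop{\mathrm{rank}}\nolimits_{\text{a-hyp}}(\mathfrak{h}) = k-1$. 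The same argument applied to the split form $\mathfrak{sp}(k-1,\mathbb{R})$ of type $C_{k-1}$, for which again $-w_{0} = \mathrm{id}$, yields $\mathop{\mathrm{rank}}\nolimits_{\text{a-hyp}}(\mathfrak{h}) = k-1$ in the symplectic case as well. Inserting these values into Theorem \ref{twgl} produces the key bound
$$\mathop{\mathrm{rank}}\nolimits_{\text{a-hyp}}(\mathfrak{l}) \leq k - (k-1) = 1.$$

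For the second step, I decompose $\mathfrak{l} = \bigoplus_{i} \mathfrak{l}_{i}$ into simple ideals and exploit the additivity of both ranks. The crucial observation is that every simple real Lie algebra of positive real rank has a-hyperbolic rank at least one: since $\mathfrak{a}^{+}$ is a proper convex cone preserved by $-w_{0}$, the involution $-w_{0}$ cannot equal $-\mathrm{id}$ on $\mathfrak{a}$, so its $+1$-eigenspace $\mathfrak{b}$ is nonzero. Combined with the bound above, this forces all but at most one simple ideal of $\mathfrak{l}$ to be compact, and the distinguished noncompact ideal (if it exists) to have a-hyperbolic rank exactly one. A consultation of Table \ref{tab1} together with the Satake classification then shows that the simple real Lie algebras of a-hyperbolic rank one are precisely the real-rank-one algebras together with the three real-rank-two exceptions $\mathfrak{sl}(3,\mathbb{R})$, $\mathfrak{su}^{\ast}(6)$ and $\mathfrak{e}_{6}^{\text{IV}}$, all of whose restricted root systems are of type $A_{2}$. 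In every case the real rank is at most two, which yields $\mathop{\mathrm{rank}}\nolimits_{\mathbb{R}}(\mathfrak{l}) \leq 2$.

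The main obstacle is this concluding classification check. It amounts to verifying that, among the restricted root system types for which $-w_{0} \neq \mathrm{id}$, namely $A_{n}$ for $n \geq 2$, $D_{2m+1}$ for $m \geq 1$ and $E_{6}$, only the type $A_{2}$ produces a one-dimensional fixed subspace of $-w_{0}$ (the fixed spaces for $D_{2m+1}$ and $E_{6}$ being of dimensions $2m$ and $4$ respectively). This is a mechanical inspection of the diagram involutions, but one needs to be careful to cover every exceptional real form without omission.
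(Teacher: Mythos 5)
Your argument is correct and is essentially the paper's own proof with the details made explicit: the authors likewise use $\mathop{\mathrm{rank}}\nolimits_{\text{a-hyp}}(\mathfrak{g})=1+\mathop{\mathrm{rank}}\nolimits_{\text{a-hyp}}(\mathfrak{h})$ (your $B_{k-1}$/$C_{k-1}$ computation), deduce $\mathop{\mathrm{rank}}\nolimits_{\text{a-hyp}}(\mathfrak{l})\leq 1$ from Theorem \ref{twgl}, handle the non-simple case by noting that each non-compact simple factor contributes at least $1$ to the a-hyperbolic rank, and conclude via the Table \ref{tab1} classification. One harmless slip: your list of simple real Lie algebras of a-hyperbolic rank one omits $\mathfrak{sl}(3,\mathbb{C})$ regarded as a real simple Lie algebra (restricted root system $A_{2}$, real rank $2$), which the paper itself includes later in the proof of Corollary \ref{co1}; since its real rank is also $2$ and your final root-system check covers type $A_{2}$ in any case, the bound $\text{\rm rank}_{\mathbb{R}}(\mathfrak{l})\leq 2$ is unaffected.
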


\begin{proof}
Because $\mathop{\mathrm{rank}}\nolimits_{\text{a-hyp}}(\mathfrak{g}) = 1+ \mathop{\mathrm{rank}}\nolimits_{\text{a-hyp}}(\mathfrak{h})$ thus it follows from Table \ref{tab1} and Theorem \ref{twgl} that if $L$ is simple then $\text{rank}_{\mathbb{R}}(\mathfrak{l}) \leq 2.$ On the other hand if $L$ is semisimple then each (non-compact) simple part of $\mathfrak{l}$ adds at least $1$ to a-hyperbolic rank of $\mathfrak{l}.$ Thus we also have $\text{rank}_{\mathbb{R}}(\mathfrak{l}) \leq 2.$
\end{proof}

\textit{Proof of Corollary \ref{co1}.} Assume now that $L$ is reductive in $G.$ Since the Lie algebra $\mathfrak{l}$ is reductive therefore
$$\mathfrak{l} = \mathfrak{c}_{\mathfrak{l}} + [\mathfrak{l},\mathfrak{l}],$$
where $\mathfrak{c}_{\mathfrak{l}}$ denotes the center of $\mathfrak{l}.$ It follows from Corollary \ref{coko} that
\begin{equation}
\text{\rm rank}_{\mathbb{R}}(\mathfrak{l}) \leq k+1,
\label{eq2}
\end{equation}
and by Proposition \ref{p1} we have $\text{rank}_{\mathbb{R}}([\mathfrak{l},\mathfrak{l}]) \leq 2.$ Note that
$$d(G)-d(H)\geq k^{2} +2k +2.$$
We will show that if $L$ acts properly on $G/H$ and $k\geq 5$ then
\begin{equation}
d(L) < k^{2} + 2k +2.
\label{eq4}
\end{equation}
Let $[\mathfrak{l},\mathfrak{l}] = \mathfrak{k}_{0} + \mathfrak{p}_{0}$ be a Cartan decomposition. From (\ref{eq2})
\begin{equation}
d(L) \leq \text{\rm dim} (\mathfrak{c}_{\mathfrak{l}}) + \text{\rm dim} (\mathfrak{p}_{0}) \leq k+1 + \text{\rm dim} (\mathfrak{p}_{0}).
\label{eq7}
\end{equation}
Also, if $\text{rank}_{\mathbb{R}}([\mathfrak{l},\mathfrak{l}]) =2$ then it follows from Table \ref{tab1} that (the only) non-compact simple part of $[\mathfrak{l},\mathfrak{l}]$ is isomorphic to $\mathfrak{sl}(3,\mathbb{R}),$ $\mathfrak{su}^{\ast}(6),$ $\mathfrak{e}_{6}^{\text{IV}}$ or $\mathfrak{sl}(3,\mathbb{C})$ (treated as a simple real Lie algebra). In such case
\begin{equation}
\text{\rm dim} (\mathfrak{p}_{0}) < 27.
\label{eq5}
\end{equation}
Therefore assume that $\text{rank}_{\mathbb{R}} ([\mathfrak{l},\mathfrak{l}])=1$ and let $\mathfrak{s} \subset [\mathfrak{l},\mathfrak{l}]$ be (the only) simple part of a non-compact type. We have
\begin{equation}
\text{\rm rank}_{\mathbb{R}} (\mathfrak{s}) =1.
\label{eq3}
\end{equation}
It follows from Theorem \ref{twy} that $\mathfrak{s}$ is reductive in $\mathfrak{g}.$ Therefore $\mathfrak{s}$ admits a Cartan decomposition
$$\mathfrak{s} = \mathfrak{k}_{\mathfrak{s}} + \mathfrak{p}_{\mathfrak{s}}$$
compatible with $\mathfrak{g}= \mathfrak{k} + \mathfrak{p},$ that is $\mathfrak{k}_{\mathfrak{s}} \subset \mathfrak{k}.$ We also have $\text{dim}(\mathfrak{p}_{s})=\text{dim}(\mathfrak{p}_{0}).$ Since $\mathfrak{k} = \mathfrak{so}(2k+1)$ we obtain
$$\text{\rm rank} (\mathfrak{k}_{s}) \leq \text{\rm rank} (\mathfrak{k}) = k.$$
Using the above condition together with (\ref{eq3}) we can check (by a case-by-case study of simple Lie algebras) that
\begin{equation}
\text{\rm dim} (\mathfrak{p}_{\mathfrak{s}}) < 4k.
\label{eq6}
\end{equation}
Now (\ref{eq7}), (\ref{eq5}) and (\ref{eq6}) imply that
$$d(L) < 5k+1$$
for $k \geq 6,$ and $d(L)<33$ for $k=5.$ Thus we have showed (\ref{eq4}). The assertion follows from Theorem \ref{twkk}.

Maciej Boche\'nski 

              Department of Mathematics and Computer Science,
							
              University of Warmia and Mazury,
							
              S\l\/oneczna 54, 10-710, Olsztyn, Poland.
							
              email: mabo@matman.uwm.edu.pl         

        Marek Ogryzek 
				
              Department of Geodesy and Land Management,
							
              University of Warmia and Mazury,
							
              Prawoche\'nskiego 15, 10-720, Olsztyn, Poland. 
							
              email: marek.ogryzek@uwm.edu.pl

\end{document}